\definecolor{webgreen}{rgb}{0,.5,0}
\definecolor{webbrown}{rgb}{.6,0,0}
\newcommand{\seqnum}[1]{\href{http://oeis.org/#1}{\underline{#1}}}
\def\modd#1 #2{#1\ ({\rm mod}\ #2)}
\begin{document}


\theoremstyle{plain}
\newtheorem{theorem}{Theorem}
\newtheorem{corollary}[theorem]{Corollary}
\newtheorem{lemma}[theorem]{Lemma}

\theoremstyle{definition}
\newtheorem*{example}{Example}

\begin{center}
\vskip 1cm{\LARGE\bf Early Pruning in the \\
\vskip .1in
Restricted Postage Stamp Problem
}
\vskip 1cm
\large
Jukka Kohonen \\
Department of Mathematics and Statistics\\
P. O. Box 68\\
FI-00014 University of Helsinki\\
Finland \\
\href{mailto:jukka.kohonen@helsinki.fi}{\tt jukka.kohonen@helsinki.fi}
\end{center}

\vskip .2 in

\begin{abstract}
  A set of non-negative integers is an additive basis with range $n$,
  if its sumset covers all consecutive integers from $0$ to $n$, but
  not $n+1$.  If the range is exactly twice the largest element of the
  basis, the basis is restricted.  Restricted bases have important
  special properties that facilitate efficient searching.  With the
  help of these properties, we have previously listed the extremal
  restricted bases up to length $k=41$.  Here, with a more prudent use
  of the properties, we present an improved search algorithm and list
  all extremal restricted bases up to $k=47$.
\end{abstract}

\section{Introduction}
Let
$$A = \{a_0 < a_1 < \cdots < a_k\}$$
 be a set of $k+1$ non-negative integers,
and 
$$
2A := \{a + a' : a,a' \in A\}
$$ its {\it sumset}.  If $2A$ contains the consecutive integers $[0,n]
:= \{0,1,\ldots,n\}$, but $n+1 \notin 2A$, then $A$ is an (additive)
{\it basis} of length $k$ and range $n_2(A) = n$.  Note that the
smallest element must be $a_0 = 0$ (otherwise the sumset would not
contain $0$).

An additive basis $A$ is {\em admissible} if $n_2(A) \ge a_k$, and
{\em restricted} if $n_2(A) = 2a_k$.  Restricted bases are admissible
by definition.  Also, $A$ is restricted if and only if $2A =
[0,2a_k]$.

\begin{example} If $A=\{0,1,3,4\}$, then $2A=[0,8]$, and $A$ is
a restricted basis with range $n_2(A) = 8 = 2a_k$.
\end{example}

\begin{example} If $A=\{0,1,2,4\}$, then $2A = [0,6] \cup \{8\}$,
and $A$ is an admissible (but not restricted) basis with range $n_2(A)
= 6 < 2a_k$.
\end{example}

The maximum range among {\em all} bases of length $k$ is denoted by
$n_2(k)$, and the maximum among {\em restricted} bases is $n_2^*(k)$.
The bases that attain these maxima are called {\it extremal bases} and
{\it extremal restricted bases}, respectively
\cite{riddell1978,wagstaff1979}.  Searching for extremal bases is
known in the literature as the {\em postage stamp problem}.  Searching
for extremal restricted bases could then be called the {\em restricted
  postage stamp problem}.

Restricted bases have important properties that facilitate efficient
searching: mirroring and lower bounds.  Using them, we have previously
presented a ``meet-in-the-middle'' algorithm, and enumerated all
extremal restricted bases up to length $k=41$
\cite{kohonen2014b,sloane}.  Here we improve the algorithm by a more
careful use of the properties, and enumerate all extremal restricted
bases up to $k=47$.

\section{Properties of restricted bases}

Let us revisit some properties of restricted bases
\cite{kohonen2014b}.  The mirroring property
\cite[Theorem~5]{kohonen2014b} is based on a reasoning similar to
Rohrbach's theorem for symmetric bases \cite[Satz~1]{rohrbach1937},
but holds for asymmetric restricted bases as well.

\begin{theorem}[Mirroring] If $A$ is a restricted basis with
range~$n$, then its {\em mirror image}
$$B = a_k - A = \{a_k - a : a \in A\}$$ is also a restricted basis
with the same range.
\label{thm:mirror}
\end{theorem}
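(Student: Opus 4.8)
The plan is to verify directly that $B$ is a restricted basis by computing its sumset. The crucial tool is the characterization already stated in the excerpt: a basis is restricted if and only if its sumset equals $[0,2a_k]$. So it suffices to show that $2B = [0,2a_k]$, which in one stroke establishes that $B$ is a basis, that it is restricted, and that its range equals $n = 2a_k$.

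First I would record the elementary structure of $B$. Because $a \mapsto a_k - a$ is a strictly decreasing bijection of $A$ onto $B$, the set $B$ again consists of $k+1$ non-negative integers, with smallest element $a_k - a_k = 0$ and largest element $a_k - a_0 = a_k$. Thus $B$ shares both its length $k$ and its largest element $a_k$ with $A$, which is what we need for the range to come out the same.

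The heart of the argument is a single reflection identity for sumsets. For any $a,a' \in A$, the corresponding element of $2B$ is $(a_k - a) + (a_k - a') = 2a_k - (a + a')$. Letting $s = a + a'$ range over $2A$, this shows $2B = \{\, 2a_k - s : s \in 2A \,\} = 2a_k - 2A$; that is, $2B$ is the image of $2A$ under the reflection $s \mapsto 2a_k - s$. Since $A$ is restricted, $2A = [0,2a_k]$, and this interval is carried onto itself by the reflection (which sends $0 \mapsto 2a_k$ and $2a_k \mapsto 0$). Hence $2B = [0,2a_k]$, and by the restricted-basis characterization $B$ is a restricted basis of range $2a_k = n$.

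There is no serious obstacle here: once the identity $2B = 2a_k - 2A$ is in hand, everything follows. The only step that genuinely uses the hypothesis is the last one — for a merely admissible basis the range may be strictly smaller than $2a_k$, so $2A$ need not be a symmetric interval and the reflection would not return the same set. The restricted condition $2A = [0,2a_k]$ is precisely what makes the sumset invariant under mirroring, and this is the point I would emphasize to contrast the restricted case with the general one.
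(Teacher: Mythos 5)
Your proof is correct and follows essentially the same route as the paper: both rest on the reflection identity $2B = 2a_k - 2A$ and the fact that the interval $[0,2a_k]$ is carried onto itself by $s \mapsto 2a_k - s$. Your additional remarks (that $B$ has the same length and largest element, and that admissibility alone would not suffice) are accurate but not needed beyond what the paper's one-line computation already establishes.
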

\begin{proof}
\begin{align*}
2B &= \{b+b' : b,b' \in B\} = \{(a_k-a)+(a_k-a') : a,a' \in A\} \\
   &= 2a_k - 2A = n - [0,n] = [0,n]. \qedhere
\end{align*}
\end{proof}

\begin{example} Let $A=\{0,1,2,3,7,11,15,17,20,21,22\}$.  This is
a restricted basis with range~$44$.  Its mirror image $B = 22-A =
\{0,1,2,5,7,11,15,19,20,21,22\}$ is another restricted basis with the
same range.
\end{example}

If $A_k = \{a_0 < a_1 < \cdots < a_k\}$, we define its $j$-{\em
  prefix} as $A_j = \{a_0, \ldots, a_j\}$, for any $0 \le j \le k$.
The following upper bounds hold for all admissible bases (including
all restricted bases).  For restricted bases, the upper bounds can be
mirrored to obtain lower bounds as well.

\begin{lemma}
  If $A_k$ is an admissible basis, and $1 \le j \le k$, then $a_j \le
  n_2(A_{j-1})+1$.
  \label{thm:admissible}
\end{lemma}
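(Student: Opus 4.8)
The plan is to argue by contradiction, exploiting the fact that $n_2(A_{j-1})$ is, by definition, the \emph{first} point where the prefix's coverage fails. Write $m = n_2(A_{j-1})$. This quantity is well-defined because $a_0 = 0$ forces $0 \in 2A_{j-1}$, so $A_{j-1}$ is itself a basis with some range $m \ge 0$; in particular $[0,m] \subseteq 2A_{j-1}$ while $m+1 \notin 2A_{j-1}$. The goal is to show $a_j \le m+1$, so I assume instead that $a_j \ge m+2$ and seek a contradiction.

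The key observation is that the value $m+1$, which the prefix cannot represent, is nonetheless forced into the sumset of the full basis under this assumption. Indeed, since $A_k$ is admissible we have $n_2(A_k) \ge a_k \ge a_j \ge m+2$, so $m+1$ lies in the covered interval $[0, n_2(A_k)] \subseteq 2A_k$. Hence there exist indices $p \le q$ with $m+1 = a_p + a_q$.

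The final step is to locate where this representation comes from. Because $m+1 \notin 2A_{j-1}$, the representation $a_p + a_q$ cannot use only elements of the prefix, so at least one index is at least $j$; say $q \ge j$. But then $a_q \ge a_j \ge m+2$, which is incompatible with $a_q \le a_p + a_q = m+1$. This contradiction shows the assumption $a_j \ge m+2$ is untenable, giving $a_j \le m+1 = n_2(A_{j-1})+1$.

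I expect no serious obstacle here; the content is a careful bookkeeping argument rather than a deep one. The only points demanding attention are (i) confirming that $n_2(A_{j-1})$ makes sense for a prefix, which follows immediately from $a_0 = 0$, and (ii) invoking admissibility for the \emph{whole} basis $A_k$ rather than for the prefix — it is precisely the inequality $n_2(A_k) \ge a_k$ that guarantees $m+1$ is covered. The conceptual heart is simply that any new representation of $m+1$ appearing in $2A_k$ but not in $2A_{j-1}$ must recruit one of the ``new'' elements $a_j, \ldots, a_k$, each of which is already too large to fit.
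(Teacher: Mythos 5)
Your proof is correct and rests on the same two ingredients as the paper's: admissibility ($n_2(A_k)\ge a_k$) guarantees that values below $a_j$ are covered by $2A_k$, and any representation involving an element $a_q$ with $q\ge j$ is too large, so small values must be represented within the prefix. The paper runs this directly (showing $[0,a_j-1]\subseteq 2A_{j-1}$, hence $n_2(A_{j-1})\ge a_j-1$), while you phrase the contrapositive at the single witness value $m+1$; the argument is the same.
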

\begin{proof}
  Represent $A_k$ as a disjoint union $A_k = A_{j-1} \cup R$, where $r
  \ge a_j$ for all $r \in R$.  Now $2A_k = (2A_{j-1}) \cup (R+A_k)$.
  All elements of $(R+A_k)$ are greater or equal to $a_j$, thus
  $2A_{j-1}$ must cover the interval $[0,a_j-1]$.  In other words
  $n_2(A_{j-1}) \ge a_j-1$.
\end{proof}

\begin{theorem}[Element-wise upper bound]
  If $A_k$ is an admissible basis, and $1 \le j \le k$, then $a_j \le
  n_2(j-1)+1$.
\label{thm:upper}
\end{theorem}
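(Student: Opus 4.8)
The plan is to reduce the theorem to Lemma~\ref{thm:admissible} by replacing the range of the specific prefix $A_{j-1}$ with the global length-dependent maximum $n_2(j-1)$. Lemma~\ref{thm:admissible} already delivers the inequality $a_j \le n_2(A_{j-1}) + 1$, in which $n_2(A_{j-1})$ is the range of the one particular prefix at hand. Since $n_2(j-1)$ is by definition the largest range attainable by \emph{any} basis of length $j-1$, the entire task is to argue that $n_2(A_{j-1}) \le n_2(j-1)$ and then chain the two inequalities.

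First I would observe that the prefix $A_{j-1} = \{a_0, \ldots, a_{j-1}\}$ is itself a basis of length $j-1$. Because $a_0 = 0$, its sumset contains $0$, so $A_{j-1}$ has a well-defined range $n_2(A_{j-1})$; and as a set of $j$ distinct non-negative integers beginning at $0$, it qualifies as a basis of length $j-1$ in the sense of the definition given in the introduction.

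Next I would invoke the defining maximality of $n_2(j-1)$: the range of this single basis cannot exceed the maximum range over all bases of length $j-1$, so $n_2(A_{j-1}) \le n_2(j-1)$. Combining this with Lemma~\ref{thm:admissible} gives $a_j \le n_2(A_{j-1}) + 1 \le n_2(j-1) + 1$, which is exactly the claimed element-wise bound.

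I do not anticipate any real obstacle; the argument is a short monotonicity remark sitting on top of the preceding lemma. The only step meriting a moment's attention is confirming that $A_{j-1}$ legitimately counts as a basis of length $j-1$, so that $n_2(j-1)$ is indeed the correct comparison quantity. This is immediate from $a_0 = 0$, which guarantees both that the prefix is a basis and that its length is $j-1$.
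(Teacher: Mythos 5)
Your proof is correct and follows exactly the paper's own argument: the paper likewise derives the theorem from Lemma~\ref{thm:admissible} via the single observation that $n_2(A_{j-1}) \le n_2(j-1)$. Your extra check that the prefix $A_{j-1}$ (containing $a_0 = 0$) is itself a basis of length $j-1$ is a reasonable bit of diligence the paper leaves implicit, but it changes nothing in the route.
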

\begin{proof}
  Follows from Lemma~\ref{thm:admissible} because $n_2(A_{j-1}) \le
  n_2(j-1)$.
\end{proof}

\begin{theorem}[Element-wise lower bound]
  If $A_k$ is a restricted basis, and $0 \le j \le
  k-1$, then $a_j \ge a_k - n_2(k-j-1) - 1$.
  \label{thm:lower}
\end{theorem}
\begin{proof}
  Let $B_k = a_k - A_k$.  By Theorem~\ref{thm:mirror}, $B_k$ is a
  restricted basis, and thus admissible.  Let $i=k-j$.  By
  Theorem~\ref{thm:upper} we have $b_i \le n_2(i-1) + 1$, thus
  \begin{equation*}
    a_j = a_k - b_i \ge a_k - n_2(k-j-1) - 1. \qedhere
  \end{equation*}
\end{proof}

\begin{corollary}[Range lower bound]
  If $A_k$ is a restricted basis, and $0 \le j \le k-2$, then
  $n_2(A_j) \ge a_k - n_2(k-j-2) - 2$.
  \label{thm:lowerrange}
\end{corollary}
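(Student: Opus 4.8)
The plan is to obtain the bound by chaining together two facts already in hand: a prefix-range reading of Lemma~\ref{thm:admissible}, and the element-wise lower bound of Theorem~\ref{thm:lower}. Both concern the element $a_{j+1}$, so the whole argument reduces to instantiating each result at the index $j+1$ and checking that this index lies in the admissible range for both.

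First I would read Lemma~\ref{thm:admissible} in its rearranged form as a lower bound on a prefix range: for an admissible basis it states $n_2(A_{i-1}) \ge a_i - 1$. Since $A_k$ is restricted it is in particular admissible, so applying this with $i = j+1$ gives $n_2(A_j) \ge a_{j+1} - 1$. The hypothesis $0 \le j \le k-2$ guarantees $1 \le j+1 \le k-1$, so $i = j+1$ is a legal index for the lemma.

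Next I would bound $a_{j+1}$ from below using Theorem~\ref{thm:lower}. Applying the element-wise lower bound at the index $j+1$ (again legal, since $0 \le j+1 \le k-1$) yields $a_{j+1} \ge a_k - n_2(k-j-2) - 1$; here $n_2(k-j-2)$ is well defined precisely because $j \le k-2$ forces $k-j-2 \ge 0$. Substituting this into the previous inequality gives $n_2(A_j) \ge a_{j+1} - 1 \ge a_k - n_2(k-j-2) - 2$, which is the claim.

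There is no real analytic obstacle here; the work is entirely bookkeeping. The one point that deserves care is the index arithmetic: one must verify that the single shift to $j+1$ simultaneously satisfies the constraint $1 \le i \le k$ needed by Lemma~\ref{thm:admissible} and the constraint $0 \le m \le k-1$ needed by Theorem~\ref{thm:lower}, and that the resulting $n_2$ argument $k-j-2$ stays non-negative. The given range $0 \le j \le k-2$ is exactly what makes all three conditions hold, so I would expect the proof to collapse to a short two-line chain once the right instantiation is identified.
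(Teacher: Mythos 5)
Your proof is correct and matches the paper's own argument exactly: the paper likewise combines Theorem~\ref{thm:lower} at index $j+1$ with the inequality $a_{j+1} \le n_2(A_j)+1$ from Lemma~\ref{thm:admissible}. Your explicit verification of the index ranges is just a more detailed spelling-out of the same one-line chain.
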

\begin{proof}
  Follows from the previous theorem since $a_{j+1} \le n_2(A_j)+1$.
\end{proof}

\section{Searching for restricted bases}
\label{sec:search}

\begin{figure}[bt]
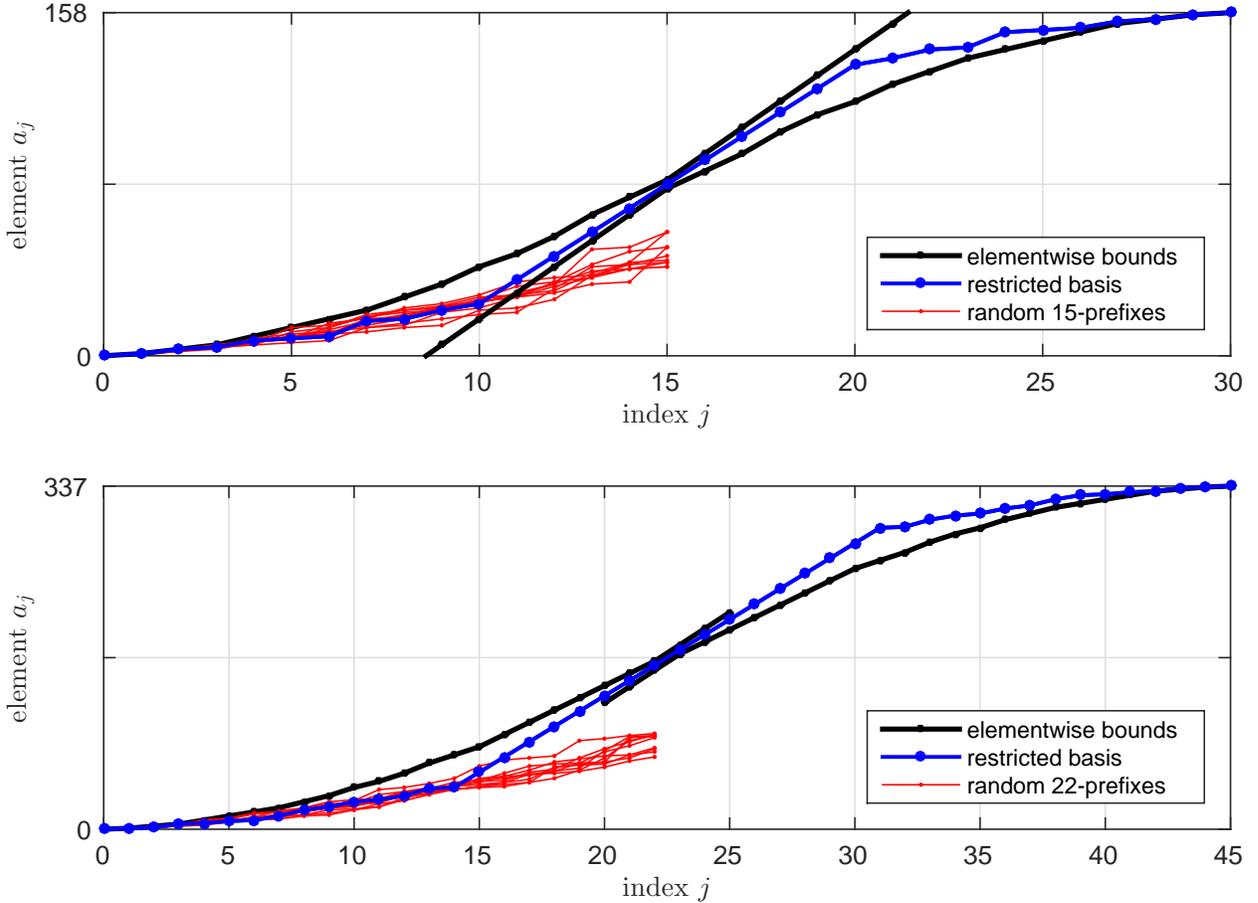

  \includegraphics[width=\textwidth]{figmime30}

  \vspace{0.5cm}
  \includegraphics[width=\textwidth]{figmime45}

  \caption{Element-wise bounds for restricted bases.  Top: $k=30$ and
    $n=316$.  Bottom: $k=45$ and $n=674$.  Thick blue line: a
    restricted basis.  Thin red lines: ten randomly generated
    admissible prefixes.}
  \label{fig:bounds}
\end{figure}

The bounds are easily calculated if the corresponding $n_2$ is known
(sequence \seqnum{A001212} in Sloane's OEIS \cite{sloane}).  The
element-wise bounds are quite narrow near the middle of a basis, as
seen in Figure~\ref{fig:bounds}.  In the vast majority of admissible
prefixes, the middle elements are far below the lower bound
(illustrated with random admissible prefixes in the figure).

\begin{example}
  Search for a restricted basis of length $k=30$ and range $n=316$
  (thus $a_k = n/2 = 158$).  From Theorem~\ref{thm:lower} we have
  $a_{15} \ge 77$.  While there are $9\;041\;908\;204$ admissible
  $15$-prefixes (\seqnum{A167809}), only $201$ of them meet the lower
  bound for $a_{15}$, and are possible prefixes for the restricted
  basis.
\end{example}

Alternatively, we could use the {\em range} bound at midpoint
($j=\lfloor k/2 \rfloor$): from Corollary~\ref{thm:lowerrange} we
obtain $n_2(A_{15}) \ge 84$.  Our previously presented algorithm
\cite[Algorithm~1]{kohonen2014b} was built upon this idea.  Challis's
algorithm \cite{challis1993} was used to enumerate the admissible
$j$-prefixes that meet the range bound.

However, if prefixes are being built progressively (adding one element
at a time), many proposed prefixes can be rejected much {\em before}
the midpoint (see Figure~\ref{fig:bounds}, top).  It is
straightforward to modify Challis's algorithm to check for the lower
bounds at each element, and to reject a prefix as soon as any element
violates the lower bound.  This approach prunes the search tree and
speeds up the search tremendously.

\begin{example}
  Searching for a restricted basis with $k=30$ and $n=316$,
  Algorithm~1 uses only the range bound $n_2(A_{15}) \ge 84$.  During
  the search it visits about $4.0 \times 10^8$ prefixes, taking about
  30 CPU seconds on our system.  It generates $791$ possible
  $15$-prefixes.

  For elements $a_{10},a_{11},\ldots,a_{15}$ we have the lower bounds
  $17,29,41,53,65,$ and $77$, respectively.  The modified search,
  which exploits these bounds, visits only about $1.9 \times 10^6$
  prefixes ($200$ times fewer than Algorithm~1), runs in about $0.1$
  CPU seconds, and generates only $16$ possible $15$-prefixes.
\end{example}

With large values of $k$, a further complication is that $n_2$ is
known only up to length $24$ \cite{kohonen2014}.  For example, if
$k=45$, the element-wise lower bounds are known for $j \ge 20$ (see
Figure~\ref{fig:bounds}, bottom).  In order to use
Theorem~\ref{thm:lower} for $j=19$, we would need $n_2(k-19-1) =
n_2(25)$, which is not known.  This is a serious limitation: in the
search for possible prefixes, the known element-wise bounds kick in at
$j=20$.  If the bounds were known, it seems plausible that most
prefixes could be rejected earlier, perhaps around $j=17$.

What we can do, with large $k$, is to use the range bound as early as
possible.  For $k=45$, $n=674$, Corollary~\ref{thm:lowerrange} gives
the bound $n_2(A_{19}) \ge 123$.  Using this as the target range in
Challis's algorithm, we can first enumerate the possible $19$-prefixes
and then extend them by continuing the algorithm (checking for
element-wise bounds at every step).  With the range bound, the
so-called {\em gaps test} in Challis's algorithm rejects many prefixes
even before $j=19$.

\section{Results}

With the method described in the previous section, we computed all
extremal restricted bases of lengths $k=42,\ldots,47$.  The prefix
computations are illustrated in Table~\ref{table:time}.  Extending the
prefixes and joining them with suffixes (as in our previous algorithm
\cite[Algorithm~1]{kohonen2014b}) into complete bases was then a
matter of a few seconds or minutes at most.  Since $n_2^*$ is {\em a
  priori} unknown, we started with the range $n$ set to its upper
bound \cite[Corollary~8]{kohonen2014b} and decreased in steps of 2,
until a restricted basis was found.

Previously, with Algorithm~1, we used 120 CPU hours to find extremal
restricted bases for $k=41$, which illustrates the strong effect of
using the early lower bounds for pruning.

\begin{table}[htb]
  \begin{center}
    \begin{tabular}{c|c|l|l|c|r}
      $k$ & $n$ & range bound & work & CPU hours & prefixes generated \\
      \hline
      42 & 588 & $n_2(A_{16})\ge 80$  & $9.6\times 10^{9}$  & $0.7$ & $28\;026\;041$ \\
      43 & 614 & $n_2(A_{17})\ge 93$  & $7.2\times 10^{10}$ & $2.0$ &  $4\;375\;029$ \\
      44 & 644 & $n_2(A_{18})\ge 108$ & $3.8\times 10^{11}$ & $8.9$ &     $317\;752$ \\
      45 & 674 & $n_2(A_{19})\ge 123$ & $1.5\times 10^{12}$ &  $35$ &      $44\;187$ \\
      46 & 704 & $n_2(A_{20})\ge 138$ & $6.4\times 10^{12}$ & $157$ &      $11\;448$ \\
      47 & 734 & $n_2(A_{21})\ge 153$ & $3.2\times 10^{13}$ & $812$ &       $4\;020$
    \end{tabular}
  \end{center}
  \caption{Computing possible prefixes for restricted bases of lengths
    $k=42,\ldots,47$.  {\em Range bound} is from
    Corollary~\ref{thm:lowerrange}, with $j$ as small as possible.
    {\em Work} is the number of prefixes visited during the search.
    {\em Prefixes generated} is the number of prefixes that meet the
    range bound.}
  \label{table:time}
\end{table}

The complete bases are listed in Table~\ref{table:bases}.  They are
all symmetric (that is, $A_k = a_k - A_k$), which was not known nor
enforced {\em a~priori}.  The bases are exactly those proposed by
Challis and Robinson's preamble-amble construction \cite[Table
  2]{challis2010}.  The result of our computation here is that (1)
these are indeed {\em extremal} restricted bases, and that (2) this is
the {\em complete} listing of extremal restricted bases of these
lengths.

\section{Discussion}

As mentioned in Section~\ref{sec:search}, efficient searching for {\em
  restricted} additive bases with our method depends crucially on the
availability of element-wise lower bounds, which in turn depends on the
knowledge of extremal {\em unrestricted} ranges $n_2$
(\seqnum{A001212}).  Roughly speaking, if $n_2$ is known up to length
$k$ (currently 24), then it provides lower bounds that are useful
for computing of $n_2^*$ up to about length $2k$.

To extend our knowledge of extremal restricted bases further, an
obvious way would be to compute first the unrestricted $n_2(k)$ for
greater lengths, say, $k=25$, and use them to provide improved lower
bounds for the restricted case.

A more interesting question is, can any connection be established
between $n_2(k)$ and $n_2^*(k)$ (\seqnum{A001212} and
\seqnum{A006638})?  For example, can it be shown that $n_2(k)-n_2^*(k)
\le d$ with some small value $d$?  For lengths $k \le 24$, where both
quantities are currently known, the difference is always zero or two
(the latter only with $k=10$, where $n_2(10)=46$ and $n_2^*(10)=44$).
If the difference could be bounded to be small, then $n_2^*(k)+d$
could be used as an upper bound for $n_2(k)$, providing in turn the
lower bounds for computing $n_2^*$ for greater lengths.

\bigskip
\hrule
\bigskip
\noindent 2000 {\it Mathematics Subject Classification}:
Primary 11B13.

\noindent \emph{Keywords: } additive basis, restricted basis.

\bigskip
\hrule
\bigskip

\noindent (Concerned with sequences \seqnum{A001212},
\seqnum{A006638}, and \seqnum{A167809}.)

\bigskip
\hrule
\bigskip

\begin{sidewaystable}[p]
\begin{center}
\small \setlength{\tabcolsep}{1.7pt}
\begin{tabular}{cc|rrrrrrrrrrrrrrrrrrrrrrrrrrrrrrrrr}
$k$ & $n_2^*(k)$ & \multicolumn{5}{l}{basis} \\
\hline
 42 & 588 &  0&   1&   2&   5&   7&  10&  11&  19&  21&  22&  25&  29&  30& $\cdots$ & +13 & $\cdots$& 264& 265& 269& 272& 273& 275& 283& 284& 287& 289& 292& 293& 294\\
\hline
 43 & 614 &  0&   1&   2&   5&   7&  10&  11&  19&  21&  22&  25&  29&  30& $\cdots$ & +13 & $\cdots$& 277& 278& 282& 285& 286& 288& 296& 297& 300& 302& 305& 306& 307\\
 43 & 614 &  0&   1&   2&   5&   6&   8&   9&  13&  19&  22&  27&  29&  33&  40&  41& $\cdots$ & +15 & $\cdots$& 266& 267& 274& 278& 280& 285& 288& 294& 298& 299& 301& 302& 305& 306& 307\\
\hline
 44 & 644 &  0&   1&   2&   5&   6&   8&   9&  13&  19&  22&  27&  29&  33&  40&  41& $\cdots$ & +15 & $\cdots$& 281& 282& 289& 293& 295& 300& 303& 309& 313& 314& 316& 317& 320& 321& 322\\
\hline
 45 & 674 &  0&   1&   2&   5&   6&   8&   9&  13&  19&  22&  27&  29&  33&  40&  41& $\cdots$ & +15 & $\cdots$& 296& 297& 304& 308& 310& 315& 318& 324& 328& 329& 331& 332& 335& 336& 337\\
\hline
 46 & 704 &  0&   1&   2&   5&   6&   8&   9&  13&  19&  22&  27&  29&  33&  40&  41& $\cdots$ & +15 & $\cdots$& 311& 312& 319& 323& 325& 330& 333& 339& 343& 344& 346& 347& 350& 351& 352\\
\hline
 47 & 734 &  0&   1&   2&   5&   6&   8&   9&  13&  19&  22&  27&  29&  33&  40&  41& $\cdots$ & +15 & $\cdots$& 326& 327& 334& 338& 340& 345& 348& 354& 358& 359& 361& 362& 365& 366& 367\\
\hline

\end{tabular}
\caption{Extremal restricted bases of lengths $k=42,\ldots,47$.  The
  notation $+c$ indicates several elements with a repeated difference
  of $c$.}
\label{table:bases}
\end{center}
\end{sidewaystable}

\end{document}